\theoremstyle{plain}
\newtheorem{theorem}{Theorem}[section]
\newtheorem{lemma}{Lemma}[section]
\newtheorem{definition}{Definition}[section]
\newtheorem{remark}{Remark}[section]
\newtheorem{example}{Example}[section]
\numberwithin{equation}{section}
\begin{document}
\title[Pentagon Equation and Compact Quantum Semigroups]{Pentagon Equation and Compact Quantum Semigroups}
\author{Marat A. Aukhadiev}
\begin{abstract}{ Following the principles of  Kac algebras and compact quantum groups duality theories, we study the adoptability of multiplicative unitary theory for compact quantum semigroups. Particularly, the non-existence of such operator for several nontrivial examples of quantum semigroups is proved. Using the pentagon equation an alternative operator which encodes compact quantum semigroup structure is proposed, extending some aspects of multiplicative unitaries.}
\end{abstract}

\maketitle

\textbf{Keywords:}{ Compact quantum semigroup, quantum group, Haar functional, Toeplitz algebra, multiplicative unitary, pentagon eqation.}

\section{Introduction}
After the discover of Pontryagin duality for abelian groups, it was extended to non-abelian groups by Tannaki \cite{Tannaka}, Krein \cite{Krein}, and Stinespring, Tatsuuma. In 1963 a notion of ``ring group'' based on Stinespring duality theory was defined by Kac. Introducing this new object he proposed new approach to duality theory of unimodular locally compact groups. Kac defined notions of such homomorphisms as comultiplication, counit and antipode. The algebra with these homomorphisms was later called Kac algebra. Following this approach Takesaki \cite{Takesaki} defined a group algebra of locally compact group as an involutive commutative Hopf - von Neumann algebra with left-invariant measure. In this theory a crucial role is played by a unitary operator, later called Kac-Takesaki operator.

Let $G$ be a locally compact group with left Haar measure $ds$. Denote by $H$ Hilbert space $L^2(G,ds)$. Define a unitary operator $u$ on $H\otimes H$:
$$(uf)(s,t)=f(st,t),\ f\in H\otimes H,\ s,t\in G.$$
This operator, called  Kac-Takesaki operator, satisfies an original property -- \emph{pentagon equation}:
$$u_{12}u_{13}u_{23}=u_{23}u_{12}.$$
Let $\mathfrak{A}(G)$ be a Hopf - von Neumann algebra of all multiplicators by $\varrho(f)$, $f\in L^\infty(G)$ on $H$. The main fact about this group duality theory is the following. The map $\delta_G\colon x\to u(x\otimes 1)u^*$ is an isomorphism from $\mathfrak{A}(G)$ to $\mathfrak{A}(G)\bar{\otimes}\mathfrak{A}(G)$ such that $(\delta_G\otimes \mathrm{id})\delta_G=(\mathrm{id}\otimes\delta_G)\delta_G$. The pair $(\mathfrak{A}(G),\delta_G)$ is the main example of Hopf - von Neumann algebra. The homomorphism $\delta_G$ is called a comultiplication.

Soon after V.G. Drinfeld discovered quantum groups in 1980, which immediately became popular, there were attempts to generalize Pontryagin duality to this class of objects. S.L. Woronowicz \cite{Woronowicz} and also L.L.Vaksman and Y.S. Soibelman proposed a $C^*$-algebraic approach to quantum groups and constructed an example of quantum group $SU_q(2)$. Later for this approach Woronowicz defined a notion of compact quantum group. The duality in this new theory  is also based on the Kac-Takesaki operator. Baaj and Skandalis showed that the unitary operator satisfying the pentagon equation and some regularity properties gives a structure of compact quantum group \cite{Skandalis}. This operator was called a multiplicative unitary.

 In \cite{Vandaele} A.Van Daele defined a natural generalization of compact quantum groups, called compact quantum semigroups. The question of existence of multiplicative unitary for these objects arised soon after examples of quantum semigroups  appeared in literature. 
 In this paper the generalization of multiplicative unitary notion for compact quantum semigroups is investigated. It turns out that such operator $u$ may not exist for some compact quantum semigroups. This fact is proved in Section 3 for the Toeplitz algebra and the algebra of continuous functions on compact semigroup with zero. Despite this, in Section 4 by means of pentagon equation we define an operator on C*-algebra, which gives a non-trivial comultiplication, and thus it generalizes the multiplicative unitary. The suggested operator, dislike the multiplicative unitary, is suitable for the compact quantum semigroups with non-faithful Haar functional. Also we prove the existence of such operator for some special class of compact quantum semigroups.
 
 The author is thankful to A. Van Daele for useful comments.

\section{Preliminaries}

Let $\mathcal{A}$ be a $C^*$-algebra. Denote by $\mathcal{A}\otimes\mathcal{A}$ the minimal $C^*$-tensor product of $\mathcal{A}$ on itself. A unital $C^*$-homomorphism $\Delta\colon\mathcal{A}\to\mathcal{A}\otimes\mathcal{A}$ is called a \emph{comultiplication}, if the \emph{coassociativity} relation holds:
\begin{equation}\label{coass}
(\Delta\otimes id)\Delta=(id\otimes\Delta)\Delta.
\end{equation}

Then $(\mathcal{A},\Delta)$ is called a \emph{compact quantum semigroup} \cite{Vandaele}. If the subspaces
\begin{equation}\label{plotl}
\left\{\Delta(b)(a\otimes I); \ a,b\in\mathcal{A}\right\},
\end{equation}
\begin{equation}\label{plotr}
\left\{\Delta(b)(I\otimes a); \ a,b\in\mathcal{A}\right\},
\end{equation}
are dense in  $\mathcal{A}\otimes\mathcal{A}$, then $(\mathcal{A},\Delta)$ is a  \emph{compact quantum group} \cite{Woronowicz}.

\begin{definition}
A counit is a $*$-homomorphism $\epsilon\colon\mathcal{A}\to\mathbb{C}$, such that for each $a\in\mathcal{A}$
$$(\epsilon\otimes id)\Delta(a)=a,$$
$$(id\otimes\epsilon)\Delta(a)=a.$$
\end{definition}

Consider the dual space $\mathcal{A}^*$. The comultiplication $\Delta$ naturally generates multiplication $\ast$ in $\mathcal{A}^*$, for any $\rho,\varphi\in\mathcal{A}^*$ we have:
$$(\rho\ast\varphi)(a)=(\rho\otimes\varphi)\Delta(a).$$

State $h\in\mathcal{A}^*$ is a Haar functional in $\mathcal{A}^*$, if the following holds for any $\rho\in\mathcal{A}^*$:
\begin{equation}\label{haar}
h\ast{\rho}=\rho\ast{h}=\lambda_{\rho}h,
\end{equation}
where $\lambda_{\rho}\in\mathbb{C}$ depends on $\rho$.

Obviously, (\ref{haar}) is equivalent to
\begin{equation}\label{haarl}
(h\otimes id)\Delta(a)=h(a)I,
\end{equation}
\begin{equation}\label{haarr}
(id\otimes h)\Delta(a)=h(a)I.
\end{equation}

The following result is proved in \cite{Woronowicz}.

\begin{theorem}\label{wor1}
Every compact quantum group $(\mathcal{A},\Delta)$ admits unique Haar functional $h$. This functional is faithful on the dense subalgebra in $\mathcal{A}$.
\end{theorem}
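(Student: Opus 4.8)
The plan is to establish the three assertions --- existence, uniqueness, and faithfulness on a dense subalgebra --- separately, relying on the convolution product $\ast$ on $\mathcal{A}^*$ and, crucially, on the density conditions (\ref{plotl}) and (\ref{plotr}) that distinguish a compact quantum group from a general compact quantum semigroup.

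For existence I would first produce an invariant state by an averaging procedure. Fix any state $\omega$ and form the Ces\`aro means $\omega_n=\frac1n\sum_{k=1}^n\omega^{\ast k}$, where $\omega^{\ast k}$ denotes the $k$-fold convolution. Since the state space of $\mathcal{A}$ is weak-$*$ compact, $(\omega_n)$ has a cluster point $h$; the telescoping identity $\omega\ast\omega_n-\omega_n=\frac1n(\omega^{\ast(n+1)}-\omega)$ shows, in the limit, that $\omega\ast h=h\ast\omega=h$. At this stage $h$ is only invariant against the single state $\omega$. The key step is to upgrade this to the full invariance (\ref{haarl}) and (\ref{haarr}), and this is exactly where I would invoke the density of the subspaces (\ref{plotl}) and (\ref{plotr}). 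Writing $E(a)=(id\otimes h)\Delta(a)$, coassociativity (\ref{coass}) gives idempotence of the associated conditional-expectation-type map, while the cancellation encoded in (\ref{plotr}) is what forces $E(a)$ to be a scalar multiple of $I$; comparing with $\omega(E(a))=h(a)$ then pins down the scalar as $h(a)$, and symmetrically for (\ref{plotl}).

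Uniqueness is the easy part and uses only the convolution identity: if $h_1,h_2$ both satisfy (\ref{haarl}) and (\ref{haarr}), then evaluating $(h_1\ast h_2)(a)=(h_1\otimes h_2)\Delta(a)$ first through the left-invariance of $h_1$ and then through the right-invariance of $h_2$ yields $h_1(a)=(h_1\ast h_2)(a)=h_2(a)$ for every $a$, so $h_1=h_2$.

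The hard part will be faithfulness on a dense subalgebra. Here I would first have to develop the Peter--Weyl theory: every element of $\mathcal{A}$ is approximated by matrix coefficients of finite-dimensional unitary corepresentations, and the linear span $\mathcal{A}_0$ of such coefficients is a dense $*$-subalgebra. On $\mathcal{A}_0$ the Haar state can be computed explicitly through Woronowicz's orthogonality relations, the quantities $h(u_{ij}^*u_{kl})$ being governed by a family of positive invertible matrices $F_\alpha$. Positivity and invertibility of the $F_\alpha$ then yield that $h(a^*a)=0$ forces $a=0$ for $a\in\mathcal{A}_0$, which is faithfulness on $\mathcal{A}_0$. The main obstacle is precisely the construction of this corepresentation theory together with the orthogonality relations, since faithfulness genuinely requires the structural input of the density conditions and cannot be read off from the averaging argument alone.
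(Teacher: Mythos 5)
First, note that the paper does not prove Theorem~\ref{wor1} at all: it quotes the result from \cite{Woronowicz}, so there is no internal proof to compare against, and your proposal has to be measured against the known Woronowicz/Van Daele argument (a complete account is in \cite{Vandaele}). Your uniqueness paragraph is correct and complete --- that part really is just the two-sided convolution computation. The faithfulness paragraph correctly names the standard route (matrix coefficients of finite-dimensional unitary corepresentations, orthogonality relations, positivity and invertibility of the $F$-matrices), but it is a deferral rather than a proof: the Peter--Weyl machinery it invokes is the bulk of Woronowicz's work and is not carried out.

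The genuine gap is in the existence step. You claim that the cluster point $h$ of the Ces\`aro means of a \emph{single, arbitrary} state $\omega$ can be upgraded, via the density conditions (\ref{plotl}), (\ref{plotr}), to a fully invariant state, by ``forcing'' $E(a)=(id\otimes h)\Delta(a)$ to be scalar. This is false for that particular $h$. Take $\mathcal{A}=C(G)$ for a nontrivial compact group $G$, with $(\Delta f)(x,y)=f(xy)$; states are probability measures and $\ast$ is convolution of measures. Choosing $\omega=\delta_g$ gives $\omega^{\ast k}=\delta_{g^k}$, and every weak-$*$ cluster point of the Ces\`aro means is the Haar measure of the closed subgroup generated by $g$ (for $g=e$ it is just $\delta_e$, the counit, which is $\ast$-idempotent). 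Such an $h$ satisfies $\omega\ast h=h\ast\omega=h$ but is not the Haar state, and $E(a)$ is genuinely non-scalar, even though $C(G)$ satisfies the density conditions --- so no argument can close the gap as you have set it up. Two repairs are needed, and both are in \cite{Vandaele}: (i) the correct intermediate lemma is that $h\ast\omega=h$ implies $h\ast\rho=\rho(1)h$ only for functionals \emph{dominated} by $\omega$, $0\le\rho\le\lambda\omega$ (this positivity argument is where the density conditions actually enter); (ii) one must then run a second compactness argument over the directed family of all finite sets of states (averaging each finite family into a single state, whose associated $h$ is invariant under every member by (i)), and only the cluster point of \emph{that} net is invariant under all states, hence satisfies (\ref{haarl}) and (\ref{haarr}). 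Incidentally, your claim that coassociativity makes $E$ idempotent also fails at this stage: $E^2=(id\otimes(h\ast h))\Delta$, so idempotence requires $h\ast h=h$, which is not yet known for the single-state cluster point.
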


The following notion of multiplicative unitary can be found in \cite{Skandalis}.

Let $H$ be a separable Hilbert space. Denote the flip by $\sigma\colon H\otimes H\to H\otimes H$:
\begin{equation}\label{flip}
\sigma(x\otimes y)=(y\otimes x), \ x,y\in H.
\end{equation}

For any $a\in B(H\otimes H)$ introduce the following notation:
\begin{equation}\label{123}
a_{12}=a\otimes I, a_{23}=I\otimes a, a_{13}=\sigma_{12}a_{23}\sigma_{12}=\sigma_{23}a_{12}\sigma_{23}.
\end{equation}

\begin{definition}
Operator $u\in B(H\otimes H)$ is a multiplicative isometry if it is an isometric operator which satisfies the pentagon equation:
\begin{equation}\label{pentagon}
u_{12}u_{13}u_{23}=u_{23}u_{12}.
\end{equation}
\end{definition}
If such $u$ is unitary, it is called a \emph{multiplicative unitary}.

Consider the GNS-construction $(H_{\varphi},\pi_{\varphi})$ of $\mathcal{A}$, associated with a state $\varphi$. Denote by $N_{\varphi}$ the left ideal $\left\{a\in\mathcal{A}| \ \varphi(a^*a)=0\right\}$. For any element $a\in\mathcal{A}$ denote by $\overline{a}$ the corresponding equivalence class in space $\mathcal{A}/N_{\varphi}$.

Baaj and Skandalis achieved the following result for compact quantum groups in \cite{Skandalis}.

\begin{theorem}\label{th3}
Let $(\mathcal{A},\Delta)$ be a compact quantum group with Haar functional $h$ and $(H_{h},\pi_{h})$ be the GNS-representation associated with $h$. Then there exists a multiplicative unitary $u\in B(H_h\otimes H_h)$, such that
\begin{equation}
(\pi_{h}\otimes\pi_{h})\Delta(a)=u(\pi_{h}(a)\otimes I)u^*.\label{eq2.12}
\end{equation}
\end{theorem}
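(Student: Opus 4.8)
The plan is to construct the operator $u$ explicitly on the GNS Hilbert space $H_h \otimes H_h$ and then verify both the defining intertwining relation \eqref{eq2.12} and the pentagon equation \eqref{pentagon}. First I would define $u$ on the dense subspace spanned by elements of the form $\overline{a} \otimes \overline{b}$ (with $a,b \in \mathcal{A}$) by the formula
\begin{equation}\label{eq:udef}
u(\overline{a} \otimes \overline{b}) = (\pi_h \otimes \pi_h)\Delta(b)(\overline{a} \otimes \overline{1}),
\end{equation}
where the right-hand side is interpreted via the GNS map, i.e. $u$ sends $\overline{a}\otimes\overline{b}$ to the image of $\Delta(b)(a \otimes 1)$ in $H_h \otimes H_h$. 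This is the natural quantum analogue of the classical Kac--Takesaki formula $(uf)(s,t) = f(st,t)$, now read through comultiplication. The density requirement \eqref{plotl} for compact quantum groups is exactly what guarantees that the range of this prescription is dense, which is the first place the compact quantum group hypothesis (as opposed to merely a semigroup) will be used.

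Next I would show that $u$ is isometric, hence extends to a bounded operator on all of $H_h \otimes H_h$. This is where the invariance of the Haar functional is essential: I would compute $\langle u(\overline{a}\otimes\overline{b}), u(\overline{c}\otimes\overline{d})\rangle$ as $(h \otimes h)\bigl((a^* \otimes 1)\Delta(b^*d)(c \otimes 1)\bigr)$, and then apply the right-invariance property \eqref{haarr} in the form $(\mathrm{id}\otimes h)\Delta(b^*d) = h(b^*d)I$ to collapse the inner tensor factor. After simplification this should reduce to $h(a^*c)\,h(b^*d) = \langle \overline{a}\otimes\overline{b},\, \overline{c}\otimes\overline{d}\rangle$, establishing the isometry. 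To upgrade isometry to unitarity I would exhibit density of the range using \eqref{plotl} together with the density of $\{\overline{a} : a \in \mathcal{A}\}$ in $H_h$; surjectivity onto a dense subspace plus isometry yields a unitary.

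Having $u$ unitary, I would verify the intertwining relation \eqref{eq2.12} by a direct computation on the dense set: apply $u(\pi_h(a)\otimes I)u^*$ to a generic vector and use the coassociativity \eqref{coass} to recognise the result as $(\pi_h\otimes\pi_h)\Delta(a)$ acting on that vector. The pentagon equation \eqref{pentagon} would then follow, at least formally, by writing out $u_{12}u_{13}u_{23}$ and $u_{23}u_{12}$ on a vector $\overline{a}\otimes\overline{b}\otimes\overline{c} \in H_h^{\otimes 3}$ and matching both sides through repeated application of \eqref{coass}; coassociativity is precisely the algebraic identity that makes the two composite maps agree.

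The main obstacle I anticipate is the isometry/unitarity step, not the pentagon identity. The pentagon equation is essentially forced once $u$ is defined by comultiplication, since it is an operator-level shadow of coassociativity, and the relation \eqref{eq2.12} is similarly formal. The genuine analytic work lies in proving that \eqref{eq:udef} is well defined and isometric: one must show it respects the null ideals $N_h$ in each factor and that the Haar invariance genuinely produces the product inner product. Here the faithfulness of $h$ on the dense subalgebra (Theorem \ref{wor1}) is what keeps the construction from degenerating, and indeed this is exactly the hypothesis that fails for the quantum \emph{semigroup} examples of Section 3, which is why no multiplicative unitary exists there. I would therefore take care to isolate precisely where faithfulness and right-invariance enter, since that localisation is the conceptual payoff of the theorem.
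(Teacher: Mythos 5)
Your overall strategy---define $u$ on elementary tensors in the GNS space via the comultiplication, get isometry from Haar invariance, get unitarity from the density conditions, then read off \eqref{eq2.12} and the pentagon equation---is the right one, and it is essentially the route the paper takes for the semigroup analogue (Theorem~\ref{th2}; the paper itself only cites Baaj--Skandalis for Theorem~\ref{th3}). However, your defining formula applies $\Delta$ to the wrong tensor factor, and this breaks both conclusions of the theorem. You set $u(\overline{a}\otimes\overline{b})$ equal to the class of $\Delta(b)(a\otimes 1)$. First, this operator does not satisfy \eqref{eq2.12}: on $\overline{c}\otimes\overline{d}$ one computes $u(\pi_h(a)\otimes I)(\overline{c}\otimes\overline{d})=\overline{\Delta(d)(ac\otimes 1)}$, whereas $(\pi_h\otimes\pi_h)(\Delta(a))\,u(\overline{c}\otimes\overline{d})=\overline{\Delta(ad)(c\otimes 1)}$, and these disagree already for commutative $\mathcal{A}$; what your $u$ actually satisfies is $(\pi_h\otimes\pi_h)\Delta(a)=u(I\otimes\pi_h(a))u^*$, with the legs exchanged. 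Second, your $u$ does not satisfy the pentagon equation \eqref{pentagon}, only its reversed form $u_{23}u_{13}u_{12}=u_{12}u_{23}$. This is visible classically: for $\mathcal{A}=C(G)$ your formula reads $(uf)(s,t)=f(s,st)$, and then
$$u_{12}u_{13}u_{23}f(s,t,r)=f(s,st,stsr),\qquad u_{23}u_{12}f(s,t,r)=f(s,st,tr),$$
which differ (take $G=\mathbb{Z}$). The Kac--Takesaki operator is $(uf)(s,t)=f(st,t)$, i.e.\ the comultiplication must hit the \emph{first} leg: the correct definition is $u(\overline{a}\otimes\overline{b})=\overline{\Delta(a)(I\otimes b)}$, exactly as in the paper's proof of Theorem~\ref{th2}. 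With this formula your outline goes through: the isometry computation uses left invariance \eqref{haarl} (rather than \eqref{haarr}), the range is spanned by classes of $\Delta(a)(I\otimes b)$ and is dense by \eqref{plotr} (rather than \eqref{plotl}), so the isometry is unitary, and both \eqref{eq2.12} and \eqref{pentagon} follow by the computations you describe.

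A secondary point: your localisation of where faithfulness of $h$ enters is off. Well-definedness and isometry of $u$ require only the invariance properties \eqref{haarl}, \eqref{haarr}, not faithfulness; indeed the paper's Theorem~\ref{th2} constructs the multiplicative isometry for \emph{every} compact quantum semigroup possessing a Haar functional, the Toeplitz algebra with its non-faithful $h$ included. What the quantum \emph{group} hypothesis buys is the density of the subspaces \eqref{plotl}, \eqref{plotr}, which makes the range of the isometry dense and hence forces unitarity. It is this density, not faithfulness, that fails in the semigroup examples of Section 3, and Theorem~\ref{th4} shows that the failure of unitarity there is genuine rather than an artifact of the construction.
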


\section{Multiplicative unitaries and compact quantum semigroups}

It is known from \cite{Woronowicz} that  the Haar functional exists for every compact quantum group. But since the proof of this statement is based on the density conditions (\ref{plotl}),(\ref{plotr}), this may not be the case for all compact quantum semigroups. However, (\ref{haar}) implies uniqueness of Haar functional if it exists. 

The following shows existence of multiplicative isometry in the case of compact quantum semigroup with Haar functional.

\begin{theorem}\label{th2}
Let $(\mathcal{A},\Delta)$ be a compact quantum semigroup with Haar functional $h$ and $(H_{h},\pi_{h})$ -- the GNS-representation associated with $h$. Then there exists a multiplicative isometry $u\in B(H_h\otimes H_h)$, such that 
\begin{equation}
u^*(\pi_{h}\otimes\pi_{h})(\Delta(a))u=\pi_{h}(a)\otimes I.\label{eq2.10}
\end{equation}
\end{theorem}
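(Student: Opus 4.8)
The plan is to construct the operator $u$ directly on the GNS space by defining it on a dense subspace and showing it extends to an isometry satisfying the pentagon equation. Following the classical Kac--Takesaki construction, I would first introduce the map on simple tensors of equivalence classes $\overline{a}\otimes\overline{b}$, where $\overline{a}$ denotes the image of $a\in\mathcal{A}$ in $H_h=\mathcal{A}/N_h$. The natural guess, mirroring the group case where $u$ sends $f(s,t)\mapsto f(st,t)$ and hence encodes the comultiplication, is to set $u^*(\overline{a}\otimes\overline{b})=(\overline{\phantom{x}}\otimes\overline{\phantom{x}})\bigl(\Delta(a)(1\otimes b)\bigr)$, i.e. to apply $\Delta$ to the first leg and twist by the second. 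First I would verify that this is well defined on $N_h$, and then that it is isometric.

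The isometry property is where the Haar functional does all the work, and this is the step I would carry out most carefully. I would compute the inner product $\langle u^*(\overline{a}\otimes\overline{b}),\,u^*(\overline{c}\otimes\overline{d})\rangle$ in $H_h\otimes H_h$, which unwinds to $(h\otimes h)\bigl((1\otimes b^*)\Delta(a^*c)(1\otimes d)\bigr)$. The key is to apply the right-invariance identity $(\mathrm{id}\otimes h)\Delta(x)=h(x)I$ from~(\ref{haarr}) to collapse the $\Delta(a^*c)$ factor; after using the trace-like behaviour of $h$ under the comultiplication this should reduce to $h(a^*c)\,h(b^*d)=\langle\overline{a}\otimes\overline{b},\overline{c}\otimes\overline{d}\rangle$, giving the isometry. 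The fact that $h$ need only satisfy~(\ref{haarl})--(\ref{haarr}) but need not be faithful is exactly why $u$ is merely an isometry rather than a unitary: the range of $u^*$ may fail to be dense.

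Once isometry is established, the intertwining relation~(\ref{eq2.10}) should follow by a direct computation: applying $(\pi_h\otimes\pi_h)(\Delta(x))$ to a vector in the range of $u^*$ and using coassociativity~(\ref{coass}) to rewrite $(\mathrm{id}\otimes\Delta)\Delta=(\Delta\otimes\mathrm{id})\Delta$, one recognises the result as $u^*$ applied to $(\pi_h(x)\otimes I)$ acting on the original vector. The pentagon equation~(\ref{pentagon}) I would then deduce from coassociativity as well, by checking the action of both $u_{12}u_{13}u_{23}$ and $u_{23}u_{12}$ on elementary tensors $\overline{a}\otimes\overline{b}\otimes\overline{c}$; both sides should collapse to the same expression built from $\Delta$ applied twice, precisely because the two ways of iterating $\Delta$ agree.

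The main obstacle I anticipate is the well-definedness and boundedness argument, since without faithfulness of $h$ one must be careful that the formula descends to the quotient $\mathcal{A}/N_h$ and that the isometry estimate genuinely controls the norm rather than only the seminorm. In particular I would need to confirm that $u^*$ maps $N_h\otimes\mathcal{A}+\mathcal{A}\otimes N_h$ into the null space, so that it passes to the completed tensor product $H_h\otimes H_h$; this is the place where the absence of density conditions~(\ref{plotl})--(\ref{plotr}) prevents the stronger unitarity conclusion of Theorem~\ref{th3}, and I expect the isometry computation above to be exactly what salvages boundedness in their absence.
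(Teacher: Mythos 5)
Your construction is, at its core, the same as the paper's: the same formula, the same Haar-functional computation for the isometry property, and the same coassociativity argument for the pentagon equation. But you have attached the defining formula to the wrong operator, and for a non-unitary isometry this is not a cosmetic slip. You set $u^*(\overline{a}\otimes\overline{b})=\overline{\Delta(a)(1\otimes b)}$ and prove that \emph{this} map is isometric; call it $v$. Then $u=v^*$ is a coisometry, not an isometry ($u^*u=vv^*$ is only the projection onto the range of $v$), so your $u$ is not a multiplicative isometry in the sense required by the theorem. The pentagon verification you describe, evaluating $u_{12}u_{13}u_{23}$ and $u_{23}u_{12}$ on elementary tensors, can only be carried out for $v$, since $u=v^*$ has no formula on elementary tensors; and if $v$ satisfies the pentagon equation, taking adjoints gives $u_{23}u_{13}u_{12}=u_{12}u_{23}$ for $u$, which is a different equation. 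Finally, relation (\ref{eq2.10}) comes out reversed: the intertwining $(\pi_h\otimes\pi_h)(\Delta(a))\,v=v\,(\pi_h(a)\otimes I)$ yields, after multiplying by $v^*$ on the left, $v^*(\pi_h\otimes\pi_h)(\Delta(a))\,v=\pi_h(a)\otimes I$, which in your notation is $u(\pi_h\otimes\pi_h)(\Delta(a))u^*=\pi_h(a)\otimes I$, not (\ref{eq2.10}). Indeed, with your orientation (\ref{eq2.10}) is already false at $a=1$: it would force $vv^*=I$, i.e. $v$ unitary, which is precisely what the paper's Toeplitz example shows can fail. The repair is one line — the formula must define $u$ itself, as the paper does: $u(\overline{a}\otimes\overline{b})=\overline{\Delta(a)(I\otimes b)}$ — and then every computation you outline goes through verbatim.

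A second, smaller slip: in the isometry computation the invariance identity you need is the left one, $(h\otimes\mathrm{id})\Delta(x)=h(x)I$ from (\ref{haarl}), not $(\mathrm{id}\otimes h)\Delta(x)=h(x)I$ from (\ref{haarr}). In $(h\otimes h)\bigl((1\otimes b^*)\Delta(a^*c)(1\otimes d)\bigr)$ the elements $b^*$ and $d$ sit in the second leg, so one must integrate out the first leg, where $\Delta(a^*c)$ stands alone; applying $(\mathrm{id}\otimes h)$ first collapses nothing, because $h$ is not tracial and cannot be pulled past $b^*$ and $d$. Your concern about well-definedness on the quotient is legitimate but is handled automatically by the polarized form of the same isometry computation, exactly as in the paper.
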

\begin{proof}
For any $a,b\in\mathcal{A}$ define
\begin{equation}
u(\overline{a}\otimes\overline{b})=\overline{\Delta(a)(I\otimes b)}.\label{eq2.11}
\end{equation}
Since $h$ is a Haar functional, using (\ref{haarl}) we obtain:
$$\left\langle u(\overline{a}\otimes\overline{b}),u(\overline{a}\otimes\overline{b})\right\rangle=(h\otimes h)((I\otimes b^*)\Delta(a^*)\Delta(a)(I\otimes b))=$$
$$=h((h\otimes id)((I\otimes b^*)\Delta(a^*a)(I\otimes b)))=h(b^*(h\otimes id)\Delta(a^*a)b)=$$
$$=h(b^*b)h(a^*a)=\left\langle \overline{a}\otimes\overline{b},\overline{a}\otimes\overline{b}\right\rangle$$
Consequently $u$ is isometry and the following verifies equality (\ref{pentagon})
$$u_{12}u_{13}u_{23}(\overline{a}\otimes\overline{b}\otimes\overline{c})=u_{12}u_{13}(\overline{a}\otimes u(\overline{b}\otimes \overline{c}))=u_{12}u_{13}(\overline{a}\otimes \overline{\Delta(b)(I\otimes c)})=$$
$$=u_{12}u_{13}\overline{((a\otimes\Delta(b))(I\otimes I\otimes c))}=\overline{(\Delta\otimes id)(\Delta(a))(id\otimes\Delta)(I\otimes b)(I\otimes I\otimes c)}=$$
$$=\overline{(id\otimes\Delta)(\Delta(a)(I\otimes b))(I\otimes I\otimes c)}=u_{23}u_{12}(\overline{a}\otimes\overline{b}\otimes\overline{c}).$$

Thus, $u$ is a multiplicative isometry. One can easily check relation (\ref{eq2.10}).
\end{proof}

The next example shows that the operator $u$ in theorem~\ref{th3} is not unitary for compact quantum semigroups in general.    Nevertheless, by theorem~\ref{th2} the multiplicative isometry exists, only the existence of Haar functional is required.

\begin{example}\label{pr1}
\end{example}

Consider the Toeplitz algebra $\mathcal{T}$ --- a minimal $C^*$-algebra, generated by an isometric unilateral shift operator $T$ and $T^*$ on a Hilbert space $H$ with basis $\left\{e_n\right\}^{\infty}_{n=0}$. Linear combinations of operators $T^nT^{*m}$ for positive integers $n,m$ are dense in the algebra $\mathcal{T}$. Such operators are denoted by $T_{n,m}$. It was proved in \cite{Aukh} that this algebra admits comultiplication $\Delta$ and the Haar functional $h$ given by:
\begin{equation}
\Delta(T)=T\otimes T,
\end{equation}
\begin{equation}\label{eq2.14}
h(I)=1,~h(T_{n,m})=0,
\end{equation}
for all $m,n\in\mathbb{N}$ except for $m=n=0$. This shows that the Toeplitz algebra can be regarded as an algebra of functions on a quantum semigroup with Haar measure.

 Obviously, $h$ is not faithful even on the dense subalgebra in $\mathcal{T}$.

\begin{lemma}\label{lem1}
The GNS representation of $\mathcal{T}$ associated with $h$ gives the same Toeplitz algebragenerated by unilateral shift operator $\pi_{h}(T)$ on $H_{h}$.

\begin{proof}
One can easiy check that $N_h$ is generated by the family of operators $\left\{T_{n,m}\right\}_{m\neq0}$. Let $H$ be a Hilbert space with basis $\left\{e_k\right\}^{\infty}_{k=0}$ where $e_k=\left[T^k\right]=T^k+N_h$. Then the corresponding GNS representation $\pi_{h}$ acts in the following way:
$$\pi_{h}(T^n)e_k=\pi_{h}(T^n)\left[T^k\right]=\left[T^{n+k}\right]=e_{n+k}.$$

Thus, $\pi_{h}(\mathcal{T})$ is isomorphic to $\mathcal{T}$, and $\pi_h(T)$ is a unilateral shift.

\end{proof}

\end{lemma}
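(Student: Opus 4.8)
The plan is to carry out the GNS construction explicitly and identify it with the defining representation of $\mathcal{T}$. First I would determine the null ideal $N_h$ on the dense $*$-subalgebra spanned by the monomials $T_{n,m}=T^nT^{*m}$. Using that $T$ is an isometry, so that $T^{*n}T^n=I$, one computes $T_{n,m}^*T_{n,m}=T^mT^{*n}T^nT^{*m}=T^mT^{*m}=T_{m,m}$, whence $h(T_{n,m}^*T_{n,m})=h(T_{m,m})$. By the definition (\ref{eq2.14}) of $h$ this vanishes whenever $m\neq0$, so every $T_{n,m}$ with $m\neq0$ lies in $N_h$. Thus modulo $N_h$ only the monomials $T_{n,0}=T^n$ can survive.

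Next I would check that the surviving classes $[T^n]$ are in fact orthonormal, which simultaneously shows they are nonzero and that nothing further collapses. For $k\geq n$ one has $(T^n)^*T^k=T^{*n}T^k=T^{k-n}=T_{k-n,0}$, and for $k<n$ one gets $T^{*(n-k)}=T_{0,n-k}$; in either case (\ref{eq2.14}) gives $\langle[T^n],[T^k]\rangle=h((T^n)^*T^k)=\delta_{nk}$. Since the $T_{n,m}$ span a dense subalgebra and all those with $m\neq0$ are null, the orthonormal family $\{[T^n]\}_{n\geq0}$ spans a dense subspace of $H_h$ and hence is an orthonormal basis; in particular $N_h$ coincides on the dense subalgebra with the linear span of $\{T_{n,m}\}_{m\neq0}$, as asserted.

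Having pinned down $H_h$ with basis $e_n=[T^n]$, the action of $\pi_h(T)$ is immediate from the GNS formula $\pi_h(x)[a]=[xa]$: one reads off $\pi_h(T)e_k=[T\cdot T^k]=[T^{k+1}]=e_{k+1}$, so $\pi_h(T)$ is precisely the unilateral shift on $H_h$. Finally, to conclude that $\pi_h(\mathcal{T})$ is again the Toeplitz algebra, I would invoke Coburn's theorem, that the $C^*$-algebra generated by any non-unitary isometry is $*$-isomorphic to $\mathcal{T}$. As $\pi_h(T)$ is a non-unitary isometry, $\pi_h(\mathcal{T})=C^*(\pi_h(T))\cong\mathcal{T}$.

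The main obstacle is the second step, namely proving that nothing beyond the $T_{n,m}$ with $m\neq0$ lies in $N_h$. The isometry relation $T^*T=I$ produces many coincidences among products of the generators, so one must rule out unexpected null combinations of the $T^n$; the orthonormality computation is exactly what excludes this and thereby fixes the isometric type of $H_h$, so that the quotient map does not identify distinct powers of the shift.
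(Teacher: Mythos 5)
Your proposal is correct and follows essentially the same route as the paper's own proof: identify $N_h$ on the dense span of the monomials $T_{n,m}$, take $e_k=[T^k]$ as an orthonormal basis of $H_h$, read off that $\pi_h(T)$ is the unilateral shift, and conclude $\pi_h(\mathcal{T})\cong\mathcal{T}$. In fact you supply exactly the details the paper waves away --- the orthonormality computation behind ``one can easily check'' and the appeal to Coburn's theorem for the final isomorphism --- so no changes are needed.
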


\begin{theorem}\label{th4}
There does not exist the multiplicative unitary for $(\mathcal{T},\Delta)$ which satisfies (\ref{eq2.12}).
\end{theorem}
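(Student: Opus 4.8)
The plan is to argue by contradiction: assume a unitary $u$ satisfying both the pentagon equation \eqref{pentagon} and \eqref{eq2.12} exists, and derive a failure of surjectivity. First I would reduce \eqref{eq2.12} to a statement about the shift. By Lemma~\ref{lem1} we may take $H_h=\ell^2(\mathbb{N}_0)$ with $e_k=\overline{T^k}$, cyclic vector $\xi:=e_0=\overline{I}$, and $\pi_h(T)=S$ the unilateral shift. Since $\Delta(T)=T\otimes T$ and $\Delta$ is a $*$-homomorphism, $(\pi_h\otimes\pi_h)\Delta(a)$ is the ``diagonal'' representation; in particular \eqref{eq2.12} is equivalent to the single relation $u(S\otimes I)u^{*}=S\otimes S$ together with its consequences. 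Rewriting \eqref{eq2.12} as the intertwiner identity $u(\pi_h(a)\otimes I)=(\pi_h\otimes\pi_h)\Delta(a)\,u$ then shows that $u$ is completely determined by its restriction to $\xi\otimes H_h$, because $\xi$ is cyclic and hence $u(\overline{a}\otimes\eta)=(\pi_h\otimes\pi_h)\Delta(a)\,u(\xi\otimes\eta)$ for all $a$ and $\eta$.

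Next I would pin down $u$ on the cyclic vector by a Haar/trace argument. Using \eqref{haarl} one has $(h\otimes h)\Delta(a^{*}a)=h(a^{*}a)$, so evaluating in the GNS picture and substituting \eqref{eq2.12} gives $\|\pi_h(a)\xi\|^{2}=h(a^{*}a)=\|(\pi_h(a)\otimes I)\psi\|^{2}$ for all $a$, where $\psi:=u^{*}(\xi\otimes\xi)$. Writing $\psi=\sum_j v_j\otimes e_j$ and $R:=\sum_j|v_j\rangle\langle v_j|$, this reads $\mathrm{Tr}(\pi_h(a)^{*}\pi_h(a)\,R)=\mathrm{Tr}(\pi_h(a)^{*}\pi_h(a)\,|\xi\rangle\langle\xi|)$; since the elements $\pi_h(a)^{*}\pi_h(a)$ span a dense subspace of $\pi_h(\mathcal{T})\supseteq\mathcal{K}(H_h)$ and $R-|\xi\rangle\langle\xi|$ is trace class, testing against matrix units forces $R=|\xi\rangle\langle\xi|$, that is $\psi=\xi\otimes\zeta_0$ for a unit vector $\zeta_0$. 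In parallel, applying \eqref{eq2.12} to $a=TT^{*}$ gives $u((I-P_0)\otimes I)u^{*}=SS^{*}\otimes SS^{*}$, whence $u(P_0\otimes I)u^{*}$ is the projection onto the ``cross'' subspace $K:=\overline{\mathrm{span}}\{e_k\otimes e_l: k=0\ \text{or}\ l=0\}$; as $\xi\otimes\eta=(P_0\otimes I)(\xi\otimes\eta)$, this yields $u(\xi\otimes H_h)\subseteq K$.

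The decisive step, and the one I expect to be the main obstacle, is to promote $u(\xi\otimes H_h)\subseteq K$ to $u(\xi\otimes H_h)\subseteq\xi\otimes H_h$, i.e. to show that the first tensor leg of $u(\xi\otimes\eta)$ is always a multiple of $\xi$. This is exactly where the pentagon equation \eqref{pentagon} must enter: conjugation of $P_0\otimes I$ coming from \eqref{eq2.12} alone cannot suffice, since \eqref{eq2.12} by itself admits unitary solutions — both $S\otimes I$ and $S\otimes S$ are pure shifts of countably infinite multiplicity and are therefore unitarily equivalent, so some unitary conjugates one to the other. Any genuine obstruction must thus exploit the rigidity imposed by \eqref{pentagon}. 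Concretely I would feed the values $u(\xi\otimes\zeta_0)=\xi\otimes\xi$ and $u(\xi\otimes H_h)\subseteq K$ into the pentagon identity applied to vectors $\xi\otimes\xi\otimes\eta$ and compare legs, aiming to show that the component of $u(\xi\otimes\eta)$ with first leg orthogonal to $\xi$ vanishes — equivalently, that $u$ must coincide on $\xi\otimes H_h$ with the canonical multiplicative isometry of Theorem~\ref{th2}, which fixes $\xi\otimes H_h$ pointwise.

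Granting this, the contradiction is immediate. By the intertwiner identity of the first paragraph, $\mathrm{ran}\,u=\overline{\mathrm{span}}\{(S^{n}\otimes S^{n})\,u(\xi\otimes e_m):n,m\ge0\}\subseteq\overline{\mathrm{span}}\{e_n\otimes e_k:k\ge n\}$, which is a proper subspace of $H_h\otimes H_h$ (it omits, for instance, $e_1\otimes e_0$). Hence $u$ cannot be surjective, contradicting unitarity, and no multiplicative unitary satisfying \eqref{eq2.12} can exist. I expect the pentagon computation of the third paragraph to be the technical heart of the argument; the remaining steps are either an identification of the GNS data or a soft trace-class and positivity argument.
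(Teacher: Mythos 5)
Your first, second and fourth paragraphs are sound. The cyclicity reduction is exactly right; the trace-class argument does force $u^{*}(\xi\otimes\xi)=\xi\otimes\zeta_{0}$ and $u(P_{0}\otimes I)u^{*}=P_{K}$ (though the paper obtains this information more cheaply from the adjoint relation (\ref{us}): since $(T^{*}\otimes I)u^{*}(e_{0}\otimes e_{i})=u^{*}(T^{*}e_{0}\otimes T^{*}e_{i})=0$, every $u^{*}(e_{0}\otimes e_{i})$ lies in $e_{0}\otimes H$, giving an orthonormal system $x_{i}$ with $u^{*}(e_{0}\otimes e_{i})=e_{0}\otimes x_{i}$); and, granting your third paragraph, the range computation in your fourth paragraph does contradict surjectivity exactly as in the paper.

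The genuine gap is the third paragraph, which you yourself flag as the technical heart: it is a plan (``aiming to show''), not an argument, and the claim it aims at is not what the pentagon equation actually delivers. Carrying out the computation you sketch --- the paper does precisely this, applying $u^{*}_{23}u^{*}_{13}u^{*}_{12}=u^{*}_{12}u^{*}_{23}$ to the vectors $e_{0}\otimes e_{0}\otimes e_{i}$ --- produces only the fixed-point identity (\ref{ux}), namely $u^{*}(x_{0}\otimes x_{i})=x_{0}\otimes x_{i}$, where $x_{0}=\zeta_{0}$ is still an unknown unit vector; it does not by itself locate $u$ on $\xi\otimes H_{h}$. One must then identify $x_{0}$: feeding $x_{0}\otimes x_{0}$ into (\ref{uts}) and comparing coefficients forces $x_{0}=e_{n}$ for some $n\geq 0$, and the two cases require different arguments. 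If $n\geq 1$, your target claim is simply false for that hypothetical $u$: one gets $x_{i}=e_{n+i}$, so $u(e_{0}\otimes e_{n+i})=e_{0}\otimes e_{i}$ while $u(e_{0}\otimes e_{j})\perp e_{0}\otimes H$ for $j<n$, i.e.\ $u$ does not map $\xi\otimes H_{h}$ into itself and does not agree with the canonical isometry of Theorem~\ref{th2}. This case is excluded in the paper not by the pentagon equation but by a dimension count relying on unitarity: by (\ref{us}) the vectors $y_{j}$ defined by $u^{*}(e_{j}\otimes e_{0})=e_{0}\otimes y_{j}$, $j\geq 1$, form an infinite orthonormal system that would have to lie in the $n$-dimensional span of $e_{0},\dots,e_{n-1}$. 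Only after this exclusion, in the remaining case $x_{0}=e_{0}$, does one obtain $u(e_{0}\otimes e_{i})=e_{0}\otimes e_{i}$ and hence your fourth paragraph. So what is missing is not a routine ``compare legs'' verification but the actual content of the paper's proof: the pentagon computation, the identification of $x_{0}$ via (\ref{uts}), and the case analysis with its separate orthogonality argument.
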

\begin{proof}
Suppose $u\in B(H\otimes H)$ is a unitary which satisfies (\ref{eq2.12}). Particularly, the following relations hold:
\begin{equation}
\label{ut} u(T\otimes I)=(T\otimes T)u,
\end{equation}

\begin{equation}
\label{uts}u(T^*\otimes I)=(T^*\otimes T^*)u,
\end{equation}
\begin{equation}
\label{us} u^*(T^*\otimes T^*)=(T^*\otimes I)u^*.
\end{equation}
 The relation (\ref{us}) implies that $H$ contains an orthonormal system $\{x_i,\ i=0,1,2... \}$ such that $u^*(e_0\otimes e_i)=e_0\otimes x_i$.
 Suppose $u$ satisfies pentagon equation. Then $u^*$ satisfies the folowing:
 \begin{equation}
 \label{ps} u_{23}^*u_{13}^*u_{12}^*=u_{12}^*u_{23}^*.
 \end{equation}
Apply the left-hand side of this equation to $(e_0\otimes e_0\otimes e_i)$, for $i \geq 0$:
$$u_{23}^*u_{13}^*u_{12}^*(e_0\otimes e_0\otimes e_i)=u_{23}^*u_{13}^*(e_0\otimes x_0\otimes e_i)=u_{23}^*(e_0\otimes x_0\otimes x_i) =$$ $$= e_0\otimes u^*(x_0\otimes x_i).$$
Calculate the right-hand side of (\ref{ps}) on the same vector:
$$u_{12}^*u_{23}^*(e_0\otimes e_0\otimes e_i)=u_{12}^*(e_0\otimes e_0\otimes x_i)=e_0\otimes x_0\otimes x_i.$$
 Consequently, \begin{equation}\label{ux} u^*(x_0\otimes x_i)=x_0\otimes x_i \mbox{ for any } i\geq 0.\end{equation} By virtue of (\ref{ut}), we have:
 $$u(e_j\otimes x_i)=e_j\otimes e_{j+i},\ \mbox{ for any } i,j\geq 0.$$
 
 Consider $x_0=\sum\limits_{i=0}^{\infty}\alpha_i e_i$ and apply the left-hand side of (\ref{uts}) to $x_0\otimes x_0$:
 $$ u(T^*\otimes I)(x_0\otimes x_0)=u( \sum\limits_{j=0}^{\infty}\alpha_{j+1}e_j\otimes x_0)=$$
 
 $$=\sum\limits_{j=0}^{\infty}\alpha_{j+1} u(e_j\otimes x_0)=\sum\limits_{j=0}^{\infty}\alpha_{j+1} e_j\otimes e_{j}.$$
 
 Calculate the right-hand side of (\ref{uts}) on $x_0\otimes x_0$, using (\ref{ux}).
 $$(T^*\otimes T^*)u(x_0\otimes x_0)= (T^*\otimes T^*)(x_0\otimes x_0)=(T^*\otimes T^*)(\sum\limits_{i=0}^{\infty}\alpha_i e_i\otimes \sum\limits_{j=0}^{\infty}\alpha_j e_j)=$$
 $$=\sum\limits_{i,j=0}^{\infty}\alpha_{i+1}\alpha_{j+1} e_i\otimes e_j. $$
 It implies that $ \alpha_i^2=\alpha_i$, $\alpha_i\alpha_j=0$ for any $i,j\geq 1$ such that $i\neq j$. Since $\|x_0\|=1$, $x_0=e_0$ or $x_0=e_n$ for some $n\geq 1$.
 
 Suppose $x_0=e_n$ for $n\geq 1$. By virtue of (\ref{ux}) and applying (\ref{ut}) to $e_0\otimes x_i$ we obtain:
 $$x_i=e_{n+i} \mbox{ for any } i\geq 0.$$
 Consequently, $u(e_0\otimes e_{n+i})=e_0\otimes e_i$, $u^*(e_0\otimes e_i)=e_0\otimes e_{n+i}$ for any $i\geq 0$. Вy (\ref{us}), the space $H$ contains an orthonormal system $\{y_j,\ j=1,2,... \}$ such that $u^*(e_j\otimes e_0)=e_0\otimes y_j$. Since $e_j\otimes e_0 \perp e_0\otimes e_i$ for any $i,j\geq 1$, the linear span of $e_0,e_1,...,e_{n-1}$ contains all $y_j$.   So, in this case $u$ is not unitary.
 
 Suppose $x_0=e_0$, then $u(e_0\otimes e_0)=e_0\otimes e_0,$ $u(e_0\otimes e_i)=e_0\otimes e_i$ for every $i\geq 0$. Using (\ref{ut}) for all $i,k\geq 0$ we obtain:
 $$u(e_k\otimes e_i)=u(T^k\otimes I)(e_0\otimes e_i)=(T^k\otimes T^k)(e_0\otimes e_i)=e_k\otimes e_{k+i}.$$
 Consequently, the image of $H\otimes H$ under $u$ lies in linear span of $\{e_i\otimes e_j| j\geq i \}$. Thus $u$ is not unitary.
\end{proof}

This example shows that there exist compact quantum semigroups which do not admit multiplicative unitary. The following shows that  multiplicative isometry cannot replace multiplicative unitary.  

\begin{example}\label{pr2}
\end{example}

For some compact quantum semigroups the GNS representation associated with Haar functional and the multiplicative isometry are trivial, so they do not encode the formation about the basic object. Consider the algebra $C(\mathcal{S})$ of all continuous functions on a compact semigroup $\mathcal{S}$ with a zero element. Define comultiplication:

\begin{equation}
(\Delta(f))(x,y)=f(xy).
\end{equation}

By associativity of multiplication in $\mathcal{S}$, $(C(\mathcal{S}),\Delta)$ is a compact quantum semigroup. The Haar functional  $h$ on $(C(\mathcal{S}),\Delta)$ is given by:
$$h(f)=f(0).$$

Obviously, $h$ is not faithful, and the corresponding GNS representation is one-dimensional. Thus, the multiplicative unitary and the multiplicative isometry do not play the same role as in the case of compact quantum groups. 

\section{The Comultiplication}

The notion of multiplicative unitary is based on the GNS representation associated with the Haar functional $h$. Since $\pi_{h}$ is a faithful representation of the C*-algebra corresponding to the compact quantum group, $\Delta$ generates comultiplication $\Delta^{'}$ on $\pi_{h}(\mathcal{A})$, given by the following commutative diagram:

$$
\begin{CD}
\mathcal{A} @>\Delta>> \mathcal{A}\otimes\mathcal{A}\\
@V{\pi_{h}}VV  @VV{\pi_{h}\otimes{\pi_{h}}}V\\
\pi_{h}(\mathcal{A}) @>\Delta^{'}>> \pi_{h}(\mathcal{A})\otimes\pi_{h}(\mathcal{A})
\end{CD}
$$

Then the multiplicative unitary in Theorem~\ref{th3} satisfies the following condition:
\begin{equation}\label{eq3.1}
\Delta^{'}(a)=u(a\otimes 1)u^*,
\end{equation}
for any $a\in\pi_{h}(\mathcal{A})$.

The right-hand side of (\ref{eq3.1}) is the image of the following operator $W:\pi_{h}(\mathcal{A})\otimes\pi_{h}(\mathcal{A})\rightarrow\pi_{h}(\mathcal{A})\otimes\pi_{h}(\mathcal{A})$ on $a\otimes I$, where $W(\cdot)=u\cdot u^*$. The pentagon equation for $u$ is encoded in the same equation for $W$. In the case of a compact quantum group we can identify $\mathcal{A}$ and $\pi_{h}(\mathcal{A})$. Then $W$ is a linear invertible operator on $\mathcal{A}\otimes\mathcal{A}$, which encodes the comultiplication of $(\mathcal{A},\Delta)$:

\begin{equation}\label{eq3.2}
\Delta(a)=W(a\otimes 1)
\end{equation}

As it was mentioned in the previous section, operator $W$ cannot be obtained by the same method. Nevertheless, thee may still exist an operator with the same conditions as $W$ except the invertibility. The following explains the idea of this operator.

Let $\mathfrak{L}(\mathcal A)$ be the algebra of all linear continuous operators on the $C^*$-algebra $\mathcal A$. Denote by $\Sigma\in\mathfrak {L}(\mathcal A\otimes \mathcal A)$ the flip operator $\Sigma(a\otimes b)=b\otimes a$ for $a,b\in \mathcal A$.

For $V\in\mathfrak {L}(\mathcal A\otimes \mathcal A)$, $a\in \mathcal A\otimes \mathcal A$ denote \\
\hspace*{\fill}$V_{12}=V\otimes id, \qquad V_{23}=id\otimes V, \qquad  V_{13}=\Sigma_{12}V_{23}\Sigma_{12}=\Sigma_{23}V_{12}\Sigma_{23}$\hspace*{\fill}\\
\hspace*{\fill}$a_{12}=a\otimes 1, \qquad a_{23}=1\otimes a, \qquad a_{13}=(\Sigma\otimes id)(a_{23})=\Sigma_{12}a_{23}$\hspace*{\fill}\\

\begin{definition}
Let $\mathcal A$ be a $C^*$-algebra. We say that $W:\mathcal A\otimes \mathcal A\rightarrow \mathcal A\otimes \mathcal A$  satisfies pentagon equation if the following holds:
\begin{equation}
W_{12}W_{13}W_{23}=W_{23}W_{12}\label{eq3.3}
\end{equation}
\end{definition} 

The following statement is obvious.

\begin{lemma}
$\Delta^L:\mathcal A\rightarrow \mathcal A\otimes \mathcal A$, $a\mapsto a\otimes 1$ and $\Delta^R:\mathcal A\rightarrow \mathcal A\otimes \mathcal A$, $a\mapsto 1\otimes a$ define comultiplication on  $\mathcal A$. These comultiplications are called trivial.
\end{lemma}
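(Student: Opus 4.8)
The plan is to verify directly that both $\Delta^L$ and $\Delta^R$ are comultiplications in the sense of equation~(\ref{coass}), since the lemma asserts exactly this. Recall that a comultiplication is a unital $*$-homomorphism $\mathcal{A}\to\mathcal{A}\otimes\mathcal{A}$ satisfying coassociativity. First I would check that $\Delta^L$ and $\Delta^R$ are unital $*$-homomorphisms: the map $a\mapsto a\otimes 1$ is the composition of the identity with the canonical inclusion $x\mapsto x\otimes 1$, which is a unital $*$-homomorphism (it sends the unit $I$ to $I\otimes I$, respects the product since $(a\otimes 1)(b\otimes 1)=ab\otimes 1$, and respects the involution since $(a\otimes 1)^*=a^*\otimes 1$). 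The argument for $\Delta^R$, $a\mapsto 1\otimes a$, is entirely symmetric.

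The main verification is coassociativity. For $\Delta^L$ I would compute both sides of~(\ref{coass}) on an arbitrary $a\in\mathcal{A}$. Applying $\Delta^L$ first gives $\Delta^L(a)=a\otimes 1$, and then
$$(\Delta^L\otimes id)\Delta^L(a)=(\Delta^L\otimes id)(a\otimes 1)=(a\otimes 1)\otimes 1=a\otimes 1\otimes 1,$$
while
$$(id\otimes\Delta^L)\Delta^L(a)=(id\otimes\Delta^L)(a\otimes 1)=a\otimes(1\otimes 1)=a\otimes 1\otimes 1,$$
so the two expressions agree. The computation for $\Delta^R$ is analogous: both $(\Delta^R\otimes id)\Delta^R(a)$ and $(id\otimes\Delta^R)\Delta^R(a)$ reduce to $1\otimes 1\otimes a$.

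I do not expect any genuine obstacle here, which is why the paper labels the statement as obvious; the only point requiring a modicum of care is bookkeeping with the tensor factors, namely keeping track of which slot each application of $\Delta^L$ or $\Delta^R$ inserts the new factor into, and identifying $(\mathcal{A}\otimes\mathcal{A})\otimes\mathcal{A}$ with $\mathcal{A}\otimes(\mathcal{A}\otimes\mathcal{A})$ via the associativity of the minimal tensor product. Since the inserted factors are always the unit $1$, both triple tensors collapse to the same element and coassociativity is immediate. Hence $(\mathcal{A},\Delta^L)$ and $(\mathcal{A},\Delta^R)$ are compact quantum semigroups, justifying the terminology \emph{trivial} comultiplications.
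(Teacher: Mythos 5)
Your proof is correct: the paper itself gives no proof at all, merely remarking that the statement is obvious, and your direct verification (unital $*$-homomorphism property plus the computation that both sides of coassociativity collapse to $a\otimes 1\otimes 1$, resp.\ $1\otimes 1\otimes a$) is exactly the routine check the paper has in mind. Nothing further is needed.
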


Operator $W$ with several additional conditions endows $\mathcal A$ with comultiplication.

\begin{theorem}
For a unital $C^*$-algebra $\mathcal A$ and a unital $C^*$-homomorphism $W\in \mathfrak {L}(\mathcal A\otimes \mathcal A)$, which satisfies the pentagon equation, define operators $\Delta,\widehat{\Delta}:\mathcal A\to \mathcal A\otimes \mathcal A$:

$$\Delta=W\Delta^L, \  \widehat{\Delta}=\Sigma W\Sigma\Delta^R.$$

Then $(\mathcal A,\Delta)$ and $(\mathcal A,\widehat{\Delta})$ are compact quantum semigroups.
\end{theorem}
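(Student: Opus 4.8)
The plan is to check, for $\Delta$ and then for $\widehat\Delta$, the two requirements of a compact quantum semigroup: that the map is a unital $C^*$-homomorphism $\mathcal A\to\mathcal A\otimes\mathcal A$, and that it satisfies the coassociativity relation (\ref{coass}). The homomorphism part is free: $\Delta=W\circ\Delta^L$ is a composition of the unital $*$-homomorphism $\Delta^L\colon a\mapsto a\otimes 1$ with the unital $*$-homomorphism $W$, hence is one itself, and $\Delta(1)=W(1\otimes 1)=1\otimes 1$. So the real content is coassociativity, which I would establish first for $\Delta$.

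For that I would record three bookkeeping identities, each obtained on simple tensors and extended by linearity and continuity. First, for $x\in\mathcal A\otimes\mathcal A$, $(\Delta\otimes id)(x)=W_{12}(x_{13})$ and $(id\otimes\Delta)(x)=W_{23}(x_{12})$; these follow directly from $\Delta(b)=W(b\otimes 1)$ and the definitions of $x_{12},x_{13}$ (using $b\otimes 1\otimes c=(b\otimes c)_{13}$ for the first). Second, the leg compatibility $W_{13}(y_{13})=(Wy)_{13}$ for $y\in\mathcal A\otimes\mathcal A$, which I would deduce from $W_{13}=\Sigma_{12}W_{23}\Sigma_{12}$ together with $\Sigma_{12}(y_{13})=y_{23}$.

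Substituting $x=\Delta(a)=W(a\otimes 1)$ into these identities and using the compatibility above, I expect to reach $(\Delta\otimes id)\Delta(a)=W_{12}W_{13}(a\otimes 1\otimes 1)$ and $(id\otimes\Delta)\Delta(a)=W_{23}W_{12}(a\otimes 1\otimes 1)$. The pentagon equation (\ref{eq3.3}) turns the right-hand side of the latter into $W_{12}W_{13}W_{23}(a\otimes 1\otimes 1)$, so the two triple coproducts coincide exactly when $W_{23}(a\otimes 1\otimes 1)=a\otimes 1\otimes 1$. This is where unitality of $W$ enters, through $W_{23}(a\otimes 1\otimes 1)=a\otimes W(1\otimes 1)=a\otimes 1\otimes 1$. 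I expect the main obstacle to be precisely this accounting: the pentagon relation alone matches the two sides only up to the extra factor $W_{23}$, which is not killed by the pentagon equation but must be absorbed using unitality, so one has to place the $a\otimes 1\otimes 1$ in the correct leg for the cancellation to be available.

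For $\widehat\Delta$ I would avoid redoing the computation by noting that $\widehat\Delta(a)=\Sigma W\Sigma\Delta^R(a)=\Sigma W\Sigma(1\otimes a)=\Sigma W(a\otimes 1)=\Sigma\Delta(a)$, so $\widehat\Delta=\Sigma\circ\Delta$ is the co-opposite comultiplication; it is a unital $*$-homomorphism since $\Sigma$ is a $*$-automorphism of $\mathcal A\otimes\mathcal A$. Writing $\chi=\Sigma_{12}\Sigma_{23}\Sigma_{12}$ for the total reversal $a\otimes b\otimes c\mapsto c\otimes b\otimes a$, I would verify on simple tensors the flip identities $(\widehat\Delta\otimes id)\widehat\Delta=\chi\,(id\otimes\Delta)\Delta$ and $(id\otimes\widehat\Delta)\widehat\Delta=\chi\,(\Delta\otimes id)\Delta$; applying $\chi$ to the coassociativity of $\Delta$ already proved then yields coassociativity of $\widehat\Delta$, completing the proof.
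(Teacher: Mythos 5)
Your proposal is correct and follows essentially the same route as the paper's proof: the same bookkeeping identities $(\Delta\otimes id)(x)=W_{12}(x_{13})$ and $(id\otimes\Delta)(x)=W_{23}(x_{12})$, the same leg-compatibility $(Wy)_{13}=W_{13}(y_{13})$, and the same insertion/removal of $W_{23}$ on $a\otimes 1\otimes 1$ via unitality so that the pentagon equation closes the gap. The only divergence is the second comultiplication, where the paper merely says ``analogously'': you instead observe $\widehat{\Delta}=\Sigma\Delta$ and deduce its coassociativity from that of $\Delta$ by conjugating with the total reversal $\chi=\Sigma_{12}\Sigma_{23}\Sigma_{12}$, a legitimate (and arguably cleaner) shortcut that avoids repeating the computation.
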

\begin{proof}

It is sufficient to check (\ref{coass}) for the maps $\Delta,\widehat{\Delta}$. For any $a\in \mathcal A\otimes \mathcal A$ we have:\\
\hspace*{\fill}
$(\Delta\otimes id)a=(W\Delta^L\otimes id)a=$
\hspace*{\fill}\\
\hspace*{\fill}
$=(W\otimes id)(\Delta^L\otimes id)a=(W\otimes id)a_{13}=W_{12}(a_{13})$
\hspace*{\fill}\\

Using this equation and relation (\ref{eq3.3}), we get:\\
\hspace*{\fill}
$(\Delta\otimes id)\Delta(a)=W_{12}(\Delta(a))_{13}=W_{12}(W(a\otimes 1))_{13}=W_{12}W_{13}(a\otimes 1)_{13}=$
\hspace*{\fill}\\
\hspace*{\fill}
$=W_{12}W_{13}(a\otimes 1\otimes 1)=W_{12}W_{13}(id\otimes W)(a\otimes 1\otimes 1)=$
\hspace*{\fill}\\
\hspace*{\fill}
$=W_{12}W_{13}W_{23}(a\otimes 1\otimes 1)=W_{23}W_{12}(a\otimes 1\otimes 1)=W_{23}(W(a\otimes 1))_{12}=$
\hspace*{\fill}\\
\hspace*{\fill}
$W_{23}(\Delta(a))_{12}=(id\otimes \Delta)\Delta(a).$
\hspace*{\fill}\\
Analogously we get coassociativity of $\widehat{\Delta}$.
\end{proof}

So,  homomorphism $W$  on $\mathcal {A}\otimes\mathcal{ A}$ satisfying  the pentagon equation endows $\mathcal{A}$ with the compact quantum semigroup structure. The following shows that  a compact quantum semigroup with some additional conditions admits such an operator $W$.

\begin{theorem}\label{th3.2}
Let $(\mathcal A,\Delta)$ be an arbitrary compact quantum semigroup witha couint $\epsilon$. Then there exist  $C^*$-homomorphisms $W^L,W^R\in\mathfrak {L}(\mathcal A\otimes \mathcal A)$, which satisfy the pentagon equation, such that:

\begin{equation}\label{eq3.4}
\Delta=W^L\Delta^L=W^R\Delta^R.
\end{equation}
\end{theorem}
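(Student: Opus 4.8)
The plan is to construct $W^L$ and $W^R$ directly from the counit rather than from any Hilbert-space operator. Define
$$W^L = \Delta\circ(id\otimes\epsilon),\qquad W^R = \Delta\circ(\epsilon\otimes id).$$
Each is a composition of unital $*$-homomorphisms: the slice maps $id\otimes\epsilon$ and $\epsilon\otimes id$ send $\mathcal A\otimes\mathcal A$ onto $\mathcal A$ (they are genuine $C^*$-homomorphisms on the minimal tensor product since $\epsilon$ is a character), and $\Delta\colon\mathcal A\to\mathcal A\otimes\mathcal A$ is a comultiplication. Hence $W^L,W^R\in\mathfrak L(\mathcal A\otimes\mathcal A)$ are unital $C^*$-homomorphisms, as required. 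The factorization (\ref{eq3.4}) is then immediate: since $\epsilon$ is unital, $(id\otimes\epsilon)(a\otimes 1)=a=(\epsilon\otimes id)(1\otimes a)$, so $W^L\Delta^L(a)=\Delta(a)=W^R\Delta^R(a)$. Note that this part uses nothing beyond unitality of $\epsilon$.

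The substance of the argument is the pentagon equation (\ref{eq3.3}) for $W^L$. First I would record the three legs acting on $\mathcal A\otimes\mathcal A\otimes\mathcal A$: $W^L_{12}$ comultiplies the first tensor factor and kills the second by $\epsilon$, $W^L_{23}$ does the same shifted one step to the right, and $W^L_{13}=\Sigma_{12}W^L_{23}\Sigma_{12}$ comultiplies the first factor into positions $1$ and $3$ while killing the third. Evaluating both sides of (\ref{eq3.3}) on an elementary tensor $a\otimes b\otimes c$ and repeatedly collapsing the $\epsilon$-legs by the counit identities $(\epsilon\otimes id)\Delta=id=(id\otimes\epsilon)\Delta$, then regrouping the surviving comultiplications by coassociativity (\ref{coass}), I expect both $W^L_{12}W^L_{13}W^L_{23}$ and $W^L_{23}W^L_{12}$ to reduce to the single operator sending $a\otimes b\otimes c$ to $(\Delta\otimes id)\Delta(a)$ times $\epsilon(b)\epsilon(c)$. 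This verifies (\ref{eq3.3}) for $W^L$; the computation is routine once the leg bookkeeping is fixed.

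The step I expect to be the genuine obstacle is the corresponding claim for $W^R$. Equation (\ref{eq3.3}) is not symmetric under the flip $\Sigma$, so the naive mirror choice $W^R=\Delta(\epsilon\otimes id)$, which collapses the \emph{left} leg, does not satisfy (\ref{eq3.3}) verbatim. Running the same leg calculation for this $W^R$ instead yields the reversed relation $W^R_{23}W^R_{13}W^R_{12}=W^R_{12}W^R_{23}$; both sides there collapse to $\epsilon(a)\epsilon(b)$ times the triple comultiplication of $c$, which is the flipped form of the pentagon rather than (\ref{eq3.3}) itself. Reconciling this is the delicate point of the proof: one must either conjugate the appropriate legs by $\Sigma$ to recast the reversed identity as (\ref{eq3.3}) while checking that the adjustment does not spoil $\Delta=W^R\Delta^R$ (a conjugation by $\Sigma$ naively replaces $\Delta$ by $\Sigma\Delta$), or interpret ``pentagon equation'' for $W^R$ in its mirror-symmetric form. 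I would expect the honest resolution to hinge entirely on choosing the flip conventions for $W^R$ consistently; the underlying algebra again uses only coassociativity (\ref{coass}) and the two counit identities.
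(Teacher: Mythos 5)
Your construction is exactly the paper's: its entire proof consists of the single line ``Consider operators $W^L=\Delta(id\otimes \epsilon)$, $W^R=\Delta(\epsilon\otimes id)$; the counit conditions imply relation (\ref{eq3.4})'', which is precisely your factorization step. The paper never verifies the pentagon equation for either operator, so your computation for $W^L$ --- collapsing the $\epsilon$-legs via $(id\otimes\epsilon)\Delta=id$ and finishing with (\ref{coass}), both sides of (\ref{eq3.3}) reducing to $\epsilon(b)\epsilon(c)$ times the triple comultiplication of $a$ --- is genuine content the paper omits, and it is correct.

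More importantly, your worry about $W^R$ is not a loose end you failed to tie up; it is a genuine error in the theorem as stated, and your reversed relation $W^R_{23}W^R_{13}W^R_{12}=W^R_{12}W^R_{23}$ is what is actually true. To see that $W^R=\Delta(\epsilon\otimes id)$ fails (\ref{eq3.3}) itself, pick $b\neq 0$ with $\epsilon(b)=0$ (such $b$ exists whenever $\mathcal A\neq\mathbb C$; in the Toeplitz example~\ref{pr1} take $b=I-TT^*$, since $\epsilon(TT^*)=1$) and apply both sides to $I\otimes b\otimes I$:
\begin{equation*}
W^R_{12}W^R_{13}W^R_{23}(I\otimes b\otimes I)=\epsilon(b)\,W^R_{12}W^R_{13}(I\otimes I\otimes I)=0,
\qquad
W^R_{23}W^R_{12}(I\otimes b\otimes I)=b\otimes I\otimes I\neq 0,
\end{equation*}
where the right-hand computation uses $(id\otimes\epsilon)\Delta(b)=b$. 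Your appraisal of the possible repairs is also accurate: $\Sigma W^R\Sigma=(\Sigma\Delta)(id\otimes\epsilon)$ does satisfy (\ref{eq3.3}) --- it is your $W^L$-construction applied to the opposite comultiplication $\Sigma\Delta$, which is coassociative with the same counit --- but then $W^R$ enters the factorization only in the conjugated form $\Delta=\Sigma(\Sigma W^R\Sigma)\Sigma\,\Delta^R$, i.e.\ matching the convention $\widehat{\Delta}=\Sigma W\Sigma\Delta^R$ of the preceding theorem in Section 4, not as an operator satisfying (\ref{eq3.3}) with $\Delta=W^R\Delta^R$. So the theorem can only be salvaged asymmetrically (pentagon (\ref{eq3.3}) for $W^L$, the reversed pentagon --- equivalently (\ref{eq3.3}) for $\Sigma W^R\Sigma$ --- for $W^R$), a defect that the paper's one-line proof passes over silently. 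Your proposal is the correct account of what can actually be proved here.
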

\begin{proof}
Consider operators $W^L=\Delta(id\otimes \epsilon)$, $W^R=\Delta(\epsilon\otimes id)$. The counit conditions imply relation (\ref{eq3.4}).
\end{proof}

\begin{remark}
Operators $W^L, W^R$ in theorem~\ref{th3.2} are projections and satisf the following conditions:\\
\hspace*{\fill}
$W^LW^R=W^R, W^RW^L=W^L.$
\hspace*{\fill}\\
Moreover, there exist $C^*$-homomorphisms $W^{L^{'}},W^{R^{'}}\in\mathfrak {L}(\mathcal A\otimes \mathcal A)$,\\
\hspace*{\fill}
$W^{L^{'}}=(id\otimes \epsilon(\cdot)I), W^{R^{'}}=(\epsilon(\cdot)I\otimes id),$
\hspace*{\fill}\\
which are projections and the followig equations hold:
$$W^{L^{'}}\Delta =\Delta^L, W^{R^{'}}\Delta =\Delta^R,$$
$$W^LW^{L^{'}}=W^L, W^RW^{R^{'}}=W^R.$$\end{remark}

Let us show that the examles of compact quantum semigroups from the previous section are given by such operator satisfying the pentagon equation.

For a compact quantum semigroup in~\ref{pr1} the existence of the counit is shown in \cite{Aukh}, given by

$$\epsilon(T_{n,m})=1,$$
for all $n,m\in\mathbb{Z}_+.$
By virtue of theorem~\ref{th3.2}, there exist  homomorphisms  $W^L$ and $W^R$ on $\mathcal{T}\otimes\mathcal{T}$ satisfying relations  (\ref{eq3.4}), given by:
$$W^L(T_{n,m}\otimes T_{k,l})=T_{n,m}\otimes T_{n,m},$$
$$W^R(T_{n,m}\otimes T_{k,l})=T_{k,l}\otimes T_{k,l}.$$

Consider the algebra of continuous functions $C(\mathcal{S}\times\mathcal{S})$ for a compact semigroup $\mathcal{S}$ and the following operator $W$ on it:
$$W(f)(x,y)=f(xy,y),$$
for $f\in C(\mathcal{S}\times\mathcal{S})$, $x,y\in\mathcal{S}$.
Then using $W$ the comultiplication $\Delta$ in example~\ref{pr2} can be constructed from $\Delta^L$:
$$\Delta=W\Delta^L.$$

Thus, these examples show that despite the fact that compact quantum semigroups do not fit  the multiplicative unitary theory in general, it is possible to define another operator which gives a structure of a compact quantum semigroup on an arbitrary C*-algebra. And this operator generalizes the multiplicative unitary.

\begin{bibdiv}\begin{biblist}

\bib{Aukh}{article}{
author={ M. A. Aukhadiev, S. A. Grigoryan, E. V. Lipacheva}
title={ Infinite-dimensional compact quantum semigroup},
journal={ Lobachevskii Journal of Mathematics},
date={ 2011},
volume= {32,4},
pages={ 304--316}}

\bib{Vandaele}{article}{author={A. Maes, A. Van Daele},
title={Notes on Compact Quantum Groups},
journal={Nieuw Arch. Wisk.},
date={1998},
volume={4,16},pages={73--112}}
\bib{Woronowicz}{article}{author={S.L. Woronowicz},
title={Compact quantum groups},
journal={Sym\'etries quantiques},
date={1998},
volume={ },pages={845--884}}

\bib{Krein}{article}{author={M.G.Krein},
title={A principle of duality for bicompact groups and quadratic block algebras},
journal={Doklady AN SSSR},
date={1949},
volume={69 },pages={725--728}}
\bib{Skandalis}{article}{author={Baaj S., Skandalis G.},
title={Unitaires multiplicatifs et dualit\'e pour les produits crois\'es de C*-alg\'ebres},
journal={Ann. Scient. Ec. Norm. Sup.},
date={1993},
volume={4,26 },pages={425--488}}
\bib{Takesaki}{article}{author={M.Takesaki},
title={Duality and von Neumann algebras},
journal={Bull. AMS},
date={1971},
volume={4,77 },pages={553--557}}
\bib{Tannaka}{article}{author={T.Tannaka},
title={Uber den Dualitatssatz der nichtkommutativen topologischen Gruppen},
journal={Tohoku Math.J.},
date={1939},
volume={45 },pages={1--12}}

\end{biblist}

\end{bibdiv}
\end{document}